\numberwithin{equation}{section}
\newtheorem{theorem}{Theorem}
\newtheorem{lemma}{Lemma}
\newtheorem{prop}{Proposition}
\newtheorem{corollary}{Corollary}
\newcommand{\Z}{\mathbb{Z}}
\newcommand{\N}{\mathbb{N}}
\newcommand{\FF}{\mathcal{F}}
\newcommand{\supp}{\mathrm{supp}}
\title{On the Littlewood--Paley--Rubio de Francia inequality for the bounded Vilenkin systems}
\author{Anton Tselishchev\thanks{The research is supported  by the Foundation for the Advancement of
		Theoretical Physics and Mathematics ``BASIS'' and by a grant from the Government of Russian Federation, agreement 075-15-2019-1620}}
\date{}
\begin{document}
	\maketitle
	
	\begin{abstract}
		The one-sided Littlewood--Paley inequality for arbitrary intervals was proved by Rubio de Francia. Later, N. Osipov proved its analogue for the system of Walsh functions. In this paper, this inequality is proved for more general Vilenkin systems.
	\end{abstract}

	\section{Introduction}

Suppose that $\{I_j\}_{j\in\Z}$ is  a set of pairwise disjoint intervals in $\Z$ and $f$ is a function on $\mathbb{T}$. Let us denote by $P_j$ the operator defined by the relation $(P_j f)\hat{ }=\chi_{I_j}\hat{f}$ where $\hat{f}$ is the Fourier transform of the function $f$ (in our case it means that it is a sequence of its Fourier coefficients). In the paper \cite{RdF} Rubio de Francia proved that for any $p\ge 2$ the following inequality holds:
$$
\Big\|\Big(\sum_j |P_j f|^2\Big)^{1/2}\Big\|_p\lesssim \|f\|_p.
$$
We write $A\lesssim B$ to indicate that the left hand side of the inequality does not exceed the right rand side multiplied by some positive constant. Here this constant does not depend on the family $\{I_j\}$ and the function $f$.

By duality it is easy to see that this inequality is equivalent to the following:
\begin{equation}
	\Big\| \sum_j f_j \Big\|_p\lesssim \Big\|\Big(\sum_j |f_j|^2\Big)^{1/2}\Big\|_p, \qquad 1<p\le 2,
	\label{RdFclass}
\end{equation}
where the functions $f_j$ are such that $\supp \widehat{f}_j\subset I_j$.

We will prove such inequality for Vilenkin systems instead of the system of exponents. First of all, we need to introduce some notation.

We denote by $L^p$ the space $L^p[0,1]$. Also, let $L^p(\ell^2) = L^p([0,1]; \ell^2)$ (this space consists of $\ell^2$-valued functions defined on $[0,1]$). Usually, there will be no difference between scalar-valued and $\ell^2$-valued functions; if a function $f$ is $\ell^2$-valued, then $|f|$ should be understood as $\|f(\cdot)\|_{\ell^2}$.

Suppose that $\{p_i\}_{i=1}^\infty$ is a sequence of integers not less than $2$. In this case the Vilenkin system corresponding to this sequence is the set of characters (that is, continuous homomorphisms into the unit circle $\{z\in\mathbb{C}: |z|=1\}$) on the group
$$
G=\prod_{i=1}^\infty \Z_{p_i}.
$$
Such systems of functions were introduced in N. Ya. Vilenkin's paper  \cite{Vil}.

We denote the product $p_1 p_2\ldots p_l$ by $m_l$ (and we set $m_0=1$). It will be convenient for us to assume that these Vilenkin functions are defined on the interval $[0,1]$ (sometimes they are assumed to be defined on $[0,1)$, see for example the definitions in the books \cite{GolEfSkvor,Weisz}; however, it is not important in the context of our paper). Indeed, the set $G$ (with the Haar measure $G$) can be identified with the interval $[0,1]$ (up to the countable set of points) by the map
$$
G\ni (a_1, a_2, \ldots)\mapsto \sum_{i=1}^\infty \frac{a_i}{m_i}.
$$
Here we suppose that $0\le a_i\le p_i-1$. It is not difficult to see that such map is measure-preserving.

Now we divide the segment $[0,1]$ into $p_1$ equal segments and denote by $r_1$ the functions which is equal to $e^{2\pi i k/p_1}$ on the $k$th segment (we start the enumeration of segments from $0$). Next, we do the same with each of our $r_1$ segments: divide it into $p_2$ parts and denote by $r_2$ the function which is equal to $e^{2\pi i k/p_2}$ on the $k$th part. We resume this procedure and get the functions $r_i$ which are the analogues of the classical Rademacher functions.

All Vilenkin functions are the products of these generalized Rademacher functions. Specifically, each number $n\in\Z_+$ (this symbol denotes the set of non-negative integers) can be written in ``$(m)$-radix numeral system'', that is, it can be represented as $n=\alpha_1+\alpha_2 m_1+\ldots \alpha_k m_{k-1}$, where $0\le \alpha_i\le p_i-1$. It will be convenient for us to use the following notation for such representation: 
$$
n\sim \begin{pmatrix}
	m_{k-1} & \ldots & m_1 & m_0\\
	\alpha_k & \ldots & \alpha_2 & \alpha_1
\end{pmatrix}.
$$
In this case, the Vilenkin function $w_n$ is uqual to $r_1^{\alpha_1} r_2^{\alpha_2}\ldots r_k^{\alpha_k}$. This representation and enumeration of Vilenkin functions may be found for instance in the paper \cite{Wat}. The Vilenkin system is an orthonormal basis in $L^2$. We also note that if all numbers $p_i$ are equal to $2$, the we get the classical Rademacher and Walsh functions as the special case of the above construction.

Now we pass to the formulation of the main result of this paper. We will assume that the sequence $p_i$ is bounded: $p_i\le M$ for some $M>2$. Such Vilenkin systems are called bounded. This is an important assumption for our argument, which will be clear from what follows. The symbol $\widehat{f}$ will denote the Vilenkin--Fourier coefficients of the function $f$: $\widehat{f}(n)=(f, w_n)=\int f\bar{w}_n$. It is clear that in this case $f=\sum_{n\in\N_0} \widehat{f}(n) w_n$ (if $f\in L^2$). 

For the remaining part of the paper we fix some bounded sequence $\{p_i\}_{i\in \N}$ and the corresponding Vilenkin system $\{w_n\}_{n\in\Z_+}$. We will prove the following theorem.

\begin{theorem}
	Suppose that $\{I_s\}$ is a family of pairwise disjoint segments in $\Z_+$ and functions $f_s$ are such that $\supp \widehat{f}_s\subset I_s$ \emph{(}that is, each of these functions is a ``Vilenkin polynomial''\emph{)}. Then for any $1<p\le 2$ the following inequality holds:
	$$
	\Big\| \sum_s f_s \Big\|_p\lesssim \Big\|\Big(\sum_s |f_s|^2\Big)^{1/2}\Big\|_p.
	$$
\end{theorem}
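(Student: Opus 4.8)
The plan is to recast the inequality as the $L^p$-boundedness of a single vector-valued operator and then run Calderón--Zygmund theory adapted to the $m$-adic (martingale) structure of $G$. Writing $P_{I_s}$ for the projection $\widehat{P_{I_s}f}=\chi_{I_s}\widehat f$, note that $\supp\widehat f_s\subset I_s$ gives $P_{I_s}f_s=f_s$, so the asserted inequality is exactly the estimate $\|Tg\|_p\lesssim\|g\|_{L^p(\ell^2)}$ for the operator
\[
Tg=\sum_s P_{I_s}g_s,\qquad g=(g_s)_s,
\]
evaluated at $g=(f_s)_s$. At $p=2$ this is immediate: the spectra of the $P_{I_s}g_s$ are pairwise disjoint, so $\|Tg\|_2^2=\sum_s\|P_{I_s}g_s\|_2^2\le\sum_s\|g_s\|_2^2=\|g\|_{L^2(\ell^2)}^2$. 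Since the target exponents satisfy $1<p\le2$, it then suffices to prove a weak-type $(1,1)$ bound for $T$ and interpolate.

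First I would record the kernel of $T$. Since $P_{I}h(x)=\int_0^1 D_I(x,y)h(y)\,dy$ with $D_I(x,y)=\sum_{n\in I}w_n(x)\overline{w_n(y)}$, the operator $T$ is an $\ell^2\to\mathbb C$ integral operator with kernel $\mathbf K(x,y)=(D_{I_s}(x,y))_s$, of norm $\|\mathbf K(x,y)\|_{\ell^2}=(\sum_s|D_{I_s}(x,y)|^2)^{1/2}$. The metric making the Vilenkin characters ``smooth'' is the $m$-adic one, whose balls are the intervals $J$ on which the first digits are fixed; the assumption $p_i\le M$ makes $(G,\mu)$ a doubling space (each $m_l$-adic interval splits into $p_{l+1}\le M$ children of comparable measure), so the dyadic/martingale Calderón--Zygmund machinery applies. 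Because this metric is an ultrametric, the decomposition is clean: for $g\in L^1(\ell^2)$ at height $\lambda$ one selects the maximal $m$-adic intervals on which the average of $|g|_{\ell^2}$ exceeds $\lambda$, and the bad part is a sum of pieces $b_j$ supported on such intervals $Q_j$ with $\int_{Q_j}b_j=0$. The weak-$(1,1)$ estimate then reduces, in the usual way, to the Hörmander-type condition
\[
\sup_{y,y'\in Q}\ \int_{[0,1]\setminus Q}\Big(\sum_s\big|D_{I_s}(x,y)-D_{I_s}(x,y')\big|^2\Big)^{1/2}\,dx\ \lesssim\ 1
\]
uniformly over all $m$-adic intervals $Q$.

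This kernel estimate is the heart of the matter, and I expect it to be the main obstacle. The starting point is the digit expansion $w_n(x)\overline{w_n(y)}=\prod_i(r_i(x)\overline{r_i(y)})^{\alpha_i(n)}$. If $Q$ is $m_l$-adic and $y,y'\in Q$, then $r_i(y)=r_i(y')$ for $i\le l$, which forces the phases attached to the low digits to cancel in $D_{I_s}(x,y)-D_{I_s}(x,y')$; in particular only frequencies $n\ge m_l$ survive, and each surviving summand can be written as a unimodular factor (carrying the dependence on the digits of $x$ that branch away from $y$) times a difference of high-digit phases. I would then organize $x\in[0,1]\setminus Q$ by the scale $k<l$ at which it branches from $y$, bound the block sums $\sum_{n\in I_s}(\cdots)$ on each such annulus, and sum in $\ell^2$ over $s$ by expanding the square and integrating in the high digits of $x$, where orthogonality of the Vilenkin characters localizes the contributions. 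The disjointness of the family $\{I_s\}$ is what keeps this $\ell^2$-sum bounded---each frequency lies in at most one block---while the boundedness $p_i\le M$ supplies the geometric gain across the scales $k$ needed for the sum over scales to converge. Getting these two mechanisms to cooperate quantitatively, without a logarithmic loss in the number of scales, is the delicate step.

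Finally, with the weak-type $(1,1)$ bound in hand and the $L^2$ bound above, vector-valued Marcinkiewicz interpolation yields $\|Tg\|_p\lesssim\|g\|_{L^p(\ell^2)}$ for all $1<p\le2$, which is the theorem. It is worth noting that both the doubling property and the cross-scale decay break down without the assumption $p_i\le M$, so the boundedness of the Vilenkin system enters the argument in an essential way.
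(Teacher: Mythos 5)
Your reduction to the vector-valued operator $Tg=\sum_s P_{I_s}g_s$ and the $L^2$ bound via disjointness of spectra are fine, and your instinct to combine this with a weak $(1,1)$ estimate is in the same spirit as the paper (which uses Gundy's theorem, the martingale substitute for Calder\'on--Zygmund theory). But the step you yourself call the heart of the matter is not merely delicate --- it is false. The H\"ormander condition
\[
\sup_{y,y'\in Q}\int_{[0,1]\setminus Q}\Big(\sum_s|D_{I_s}(x,y)-D_{I_s}(x,y')|^2\Big)^{1/2}\,dx\lesssim 1
\]
fails already for a \emph{single} interval in the Walsh case ($p_i\equiv 2$). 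Take $I=[2^k,2^k+M)$ with $M<2^k$, let $Q=[0,2^{-k})$, and choose $y\in[0,2^{-k-1})$, $y'\in[2^{-k-1},2^{-k})$. Every $n\in I$ is $n=2^k+m$ with $0\le m<M$ and no carrying, so $w_n=w_{2^k}\,w_m$; moreover $w_m\equiv 1$ on $Q$ for all $m<2^k$, while the Rademacher factor $w_{2^k}$ takes the values $+1$ at $y$ and $-1$ at $y'$. Hence
\[
D_I(x,y)-D_I(x,y')=w_{2^k}(x)\,D_M(x)\,\big(w_{2^k}(y)-w_{2^k}(y')\big)=2\,w_{2^k}(x)\,D_M(x),
\]
where $D_M=\sum_{m<M}w_m$ is the Walsh--Dirichlet kernel. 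Your integral therefore equals $2\int_{2^{-k}}^1|D_M|$. By the classical result of Fine on Walsh Lebesgue constants, choosing $M$ with alternating binary digits gives $\int_0^1|D_M|\asymp k$, while $\int_0^{2^{-k}}|D_M|\le M2^{-k}\le 1$; so the left-hand side of your condition is $\gtrsim k$, unbounded. The obstruction is exactly the misalignment of $I$ with the dyadic grid: the modulation $w_{2^k}$ oscillates at the scale of $Q$ itself, so there is no cancellation between $y$ and $y'$, and what survives is a full Dirichlet kernel of logarithmic $L^1$ norm. No bookkeeping over ``annuli'' can repair this, because sharp Vilenkin partial-sum projections simply are not Calder\'on--Zygmund operators with respect to the $m$-adic metric.

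What is genuinely needed --- and what the paper does --- is to remove this misalignment \emph{before} invoking any weak-type machinery. Each $I_s=[a_s,b_s)$ is first decomposed combinatorially (via the $(m)$-radix expansions of $a_s$ and $b_s$) into pieces which, after demodulation by $w_{a_s}^{-1}$ or $w_{b_s}^{-1}$, become grid-aligned blocks $\delta_{k,l}=[lm_{k-1},(l+1)m_{k-1}-1]$; note that the demodulating character depends on $s$ and on the piece, which is precisely why the fix cannot be expressed as a kernel condition on the single operator $T$. The weak-type input is then Gundy's theorem, whose hypothesis is a \emph{support} condition ($\supp\Delta_k f\subset e_k\in\FF_{k-1}$ forces $\{Tf\neq 0\}\subset\bigcup_k e_k$) rather than kernel smoothness, and it is satisfied by the modulated block operator $Gh=\sum w_{a_{j,k}}\Delta_{k,l}h_{j,k}$, not by $T$. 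Finally --- a step with no counterpart in your outline --- after Gundy one is left with the square function of the demodulated pieces, and bounding it by $\|(\sum_s|f_s|^2)^{1/2}\|_p$ requires the non-martingale square function $\widetilde S$ and its $L^p$ equivalence (Lemma 1 of the paper), which is where the boundedness $p_i\le M$ truly enters. So the gap is structural, not technical: the kernel estimate at the center of your argument is false, and repairing it leads exactly to the decomposition--demodulation--square-function scheme of the paper.
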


We note that the similar statement for Walsh functions was proved by N. Osipov in the paper \cite{Osip}. However, it turned out that the generalization for the Vilenkin systems is not straightforward --- we will use the special (not martingale) square function and also we will need a refinement of the combinatorial constructions from the paper \cite{Osip}. 

Besides that, the classical Rubio de Francia's inequality (\ref{RdFclass}) was later proved for $p=1$ by Bourgain in his paper \cite{Bour} and also for all $p\in(0,2]$ by Kislyakov and Parilov in \cite{KisLP}. In the context of Vilenkin functions the case $p\le 1$ will be discussed briefly in Section 4 of this paper.

The author is kindly grateful to his scientific advisor  S. V. Kislyakov for posing this problem and to N. N. Osipov and V. A. Borovitsky for the fruitful discussions.

\section{Auxiliary statements}

Let $k, l \in \Z_+$ be integers that are represented in $(m)$-radix numerical system in the following way:
$$
k=\alpha_1 + \alpha_2 m_1+\ldots+\alpha_j m_{j-1}, \quad l=\beta_1 +\beta_2 m_1 +\ldots+ \beta_j m_{j-1}.
$$
It is easy to see that the product of functions  $w_k$ and $w_l$ is the function $w_{k\dotplus l}$, where $k\dotplus l$ is the number of the following form:
$$
\begin{pmatrix}
	m_{j-1} & \ldots & m_1 & m_0\\
	(\alpha_j+\beta_j)\, \mathrm{mod}\, p_j & \ldots & (\alpha_2+\beta_2)\, \mathrm{mod}\, p_2 & (\alpha_1+\beta_1)\, \mathrm{mod}\, p_1
\end{pmatrix}.
$$
We denote by $(\dotdiv k)$ the inverse of $k$ with respect to the operation $\dotplus$. 

The symbol $\FF_k$ denotes the $\sigma$-algebra generated by the intervals  $[jm_k^{-1},(j+1)m_k^{-1})$, $0\le j\le m_k-1$, on $[0,1)$. The operator $\mathbb{E}_k$ defined by the relation 
$$
\mathbb{E}_k f =\sum_{n=0}^{m_k-1} (f,w_n)w_n,
$$
is the  conditional expectation with respect to $\FF_k$. The corresponding martingale differences have the following form:
$$
\Delta_k f=\mathbb{E}_k f-\mathbb{E}_{k-1} f=\sum_{n=m_{k-1}}^{m_k-1} (f, w_n)w_n.
$$
We also use the notation $\Delta_0 f$ for the function $(f, w_0)w_0$ (this is simply the function equal to $\int f$ on $[0,1]$). We note that the boundedness of the sequence $\{p_i\}$ is equivalent to the regularity of the filtration $\{\FF_k\}$ (by regularity we mean that for any set $e\in\FF_k$ there is a set $e'\in\FF_{k-1}$ which contains $e$ such that its measure does not exceed $M$ times the measure of $e$).

The martingale square function $Sf$ is defined as
$$
Sf=\sqrt{\sum_{j=0}^\infty |\Delta_j f|^2}.
$$
It is well known that for $p>1$ we have $\|Sf\|_p\asymp \|f\|_p$ (see for example \cite[\S 2.2]{Weisz}).

However, when we work with Vilenkin systems it is often more convenient to use another square function. We define the operators $\Delta_{k,l}$ in a following way:
$$
\Delta_{k,l}f=\sum_{n=lm_{k-1}}^{(l+1)m_{k-1}-1} (f,w_n)w_n, \quad 1\le l\le p_k-1.
$$
The square function we are going to use is defined as
$$
\widetilde{S} f = \Big(  |\Delta_0 f|^2 + \sum_{k=1}^\infty \sum_{l=1}^{p_k-1} |\Delta_{k,l} f|^2 \Big)^{1/2}.
$$
Such operator was already introduced in the paper \cite{Wat}. It was also proved that its $L^p$-norm can be estimated by the $L^p$-norm of the function $f$. We will present the short proof of this fact here for the sake of completeness (also, this proof works equally well for $\ell^2$-valued functions $f$).

\begin{lemma}
	For $1<p<\infty$ and $f\in L^p$ we have $\|\widetilde{S} f\|_p \asymp \|f\|_p$.
\end{lemma}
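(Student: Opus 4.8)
The plan is to reduce everything to two ingredients: the already-known equivalence $\|Sf\|_p\asymp\|f\|_p$, and the elementary block identity
$$
\Delta_{k,l}f = r_k^l\,\mathbb{E}_{k-1}(\bar r_k^l f), \qquad 1\le l\le p_k-1,
$$
which holds because a number $n\in[lm_{k-1},(l+1)m_{k-1})$ has leading digit $\alpha_k=l$ and free lower digits, so that $w_n=r_k^l w_{n'}$ with $0\le n'<m_{k-1}$. In particular $|\Delta_{k,l}f|\le\mathbb{E}_{k-1}|f|$, and a one-line Parseval computation on the cyclic group $\Z_{p_k}$ inside each atom of $\FF_{k-1}$ upgrades this to $\sum_{l=1}^{p_k-1}|\Delta_{k,l}f|^2=\mathbb{E}_{k-1}|\Delta_k f|^2$, so that
$$
(\widetilde{S}f)^2=|\Delta_0 f|^2+\sum_{k=1}^\infty \mathbb{E}_{k-1}|\Delta_k f|^2 .
$$
In other words, $\widetilde{S}$ is exactly the conditional (predictable) square function, and this is the viewpoint I would exploit.

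The lower bound $\|f\|_p\lesssim\|\widetilde{S}f\|_p$ is the easy half and uses only the boundedness of the $p_i$: by Cauchy--Schwarz $|\Delta_k f|^2=\big|\sum_{l=1}^{p_k-1}\Delta_{k,l}f\big|^2\le (M-1)\sum_{l=1}^{p_k-1}|\Delta_{k,l}f|^2$, whence $Sf\le\sqrt{M-1}\,\widetilde{S}f$ pointwise and $\|f\|_p\asymp\|Sf\|_p\lesssim\|\widetilde{S}f\|_p$. This is precisely where the hypothesis $p_i\le M$ enters, and where an unbounded Vilenkin system would fail.

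The core of the matter is the upper bound, and here the key technical lemma is that for every $q\ge2$ and arbitrary functions $(h_k)$ one has
$$
\Big\|\Big(\sum_k \mathbb{E}_{k-1}|h_k|^2\Big)^{1/2}\Big\|_q\lesssim \Big\|\Big(\sum_k|h_k|^2\Big)^{1/2}\Big\|_q .
$$
I would prove this by $L^{q/2}$-duality (the case $q=2$ being Parseval): testing against $\phi\ge0$ with $\|\phi\|_{(q/2)'}\le1$ and moving each $\mathbb{E}_{k-1}$ onto $\phi$ by self-adjointness, the left side is controlled by $\int\big(\sum_k|h_k|^2\big)\,\mathcal M\phi$, where $\mathcal M\phi=\sup_j\mathbb{E}_j\phi$ dominates every $\mathbb{E}_{k-1}\phi$; Hölder together with Doob's maximal inequality (legitimate since $(q/2)'<\infty$ when $q>2$) finishes it. Applying this with $q=p$ and $h_k=\Delta_k f$ and invoking the identity for $(\widetilde{S}f)^2$ settles the whole range $2\le p<\infty$ at once.

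For $1<p<2$ the $L^{p/2}$-duality breaks down ($p/2<1$), so I would instead dualize the square-function operator itself; this is the main obstacle. Writing $T f=(\Delta_0 f,(\Delta_{k,l}f)_{k,l})\in L^p(\ell^2)$, so that $\|\widetilde{S}f\|_p=\|Tf\|_{L^p(\ell^2)}$, the adjoint is $T^*g=\Delta_0 g_0+\sum_{k,l}\Delta_{k,l}g_{k,l}$ (each $\Delta_{k,l}$ being a self-adjoint projection), and $\|\widetilde{S}\|_{L^p\to L^p}=\|T^*\|_{L^{p'}(\ell^2)\to L^{p'}}$ with $p'>2$. Since $\Delta_k(T^*g)=\sum_{l=1}^{p_k-1}\Delta_{k,l}g_{k,l}$, the square-function theorem and Cauchy--Schwarz (again using $p_k\le M$) give $\|T^*g\|_{p'}\lesssim\big\|(\sum_{k,l}|\Delta_{k,l}g_{k,l}|^2)^{1/2}\big\|_{p'}$; then the block identity yields $|\Delta_{k,l}g_{k,l}|\le\mathbb{E}_{k-1}|g_{k,l}|$, conditional Jensen collapses the finitely many $l$'s at level $k$ into $\mathbb{E}_{k-1}\big(\sum_l|g_{k,l}|^2\big)$, and the lemma above, now at the exponent $p'\ge2$, bounds the whole expression by $\|g\|_{L^{p'}(\ell^2)}$. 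Thus the $p<2$ case is handled entirely by transferring it to $p'>2$ through duality and by exploiting that each block operator is a conditional expectation up to the unimodular factor $r_k^l$; the boundedness of $\{p_i\}$ is used twice, through the Cauchy--Schwarz constant $M$.
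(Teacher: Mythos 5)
Your proof is correct, but it reaches the hard inequality $\|\widetilde{S}f\|_p\lesssim\|f\|_p$ by a genuinely different route than the paper. Both arguments rest on the same block identity ($\Delta_{k,l}$ is a conditional expectation twisted by a unimodular factor), and both get the easy direction from the pointwise bound $Sf\lesssim \widetilde{S}f$. But the paper never forms the quantity $\mathbb{E}_{k-1}|\Delta_k f|^2$: instead it splits $\widetilde{S}f$ into at most $M$ square functions of the form $\big(\sum_k|\Delta_{k,l_k}f|^2\big)^{1/2}$ (one choice $l_k$ per level $k$), rewrites each block via $\Delta_{k,l_k}f = w_{l_km_{k-1}}\mathbb{E}_{k-1}[w_{l_km_{k-1}}^{-1}\Delta_k f]$, and then invokes a single vector-valued inequality $\|\{\mathbb{E}_{n_k}g_k\}\|_{L^p(\ell^2)}\lesssim\|\{g_k\}\|_{L^p(\ell^2)}$, proved in one line by observing that the martingale square function of $\{\mathbb{E}_{n_k}g_k\}$ is pointwise dominated by that of $\{g_k\}$; this is uniform in $1<p<\infty$ (no case split) and applies verbatim to $\ell^2$-valued $f$, which the paper actually needs later, since Lemma 1 is applied to $\{\widetilde{g}_s\}_s$ in the proof of Theorem 1. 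Your route instead identifies $(\widetilde{S}f)^2=|\Delta_0f|^2+\sum_k\mathbb{E}_{k-1}|\Delta_kf|^2$ exactly, i.e.\ recognizes $\widetilde{S}$ as the conditional (predictable) square function --- a structural observation absent from the paper --- and then needs the dual-Doob inequality for $q\ge 2$ plus a separate duality argument (via the adjoint $T^*$) for $1<p<2$, because the $L^{q/2}$-duality proof of your key lemma genuinely fails below $q=2$. Each of your steps checks out: the Parseval identity on atoms of $\FF_{k-1}$, the computation of $T^*$, the identity $\Delta_k(T^*g)=\sum_l\Delta_{k,l}g_{k,l}$, and the exponent bookkeeping in Doob's inequality ($(q/2)'<\infty$ exactly when $q>2$, the case $q=2$ being trivial) are all correct, and the boundedness $p_i\le M$ enters in your argument at the same two kinds of places (Cauchy--Schwarz over the at most $M-1$ blocks at each level) as in the paper's splitting into at most $M$ sub-square-functions. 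The trade-off: your proof is longer and bifurcated by the case split, but ties $\widetilde{S}$ to classical martingale theory; the paper's is shorter, uniform in $p$, and directly vector-valued. If you keep your version, add a remark that every ingredient you use (Doob, dual Doob, square-function equivalence, duality) remains valid for $\ell^2$-valued $f$, since that is the form in which the lemma is used in the proof of Theorem 1.
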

\begin{proof}
	First we note that we have the pointwise estimate $Sf \lesssim \widetilde{S}f$ and therefore we only need to prove the inequality $\|\widetilde{S} f\|_p \lesssim \|Sf\|_p$. For any number $k$ we fix $l_k$, $1\le l_k\le p_k-1$. Our goal is to prove the estimate
	$$
	\Big\| \Big( \sum_k |\Delta_{k, l_k} f|^2 \Big)^{1/2} \Big\|_p \lesssim \|Sf\|_p.
	$$
	The desired inequality follows from this estimate since $\widetilde{S}f$ can be represented as the square root of the sum of the squares of certain number (not more than $M$) quantities from the left hand side of the estimate.
	
	Consider the family of functions $(\Delta_1 f, \Delta_2 f, \ldots)=(f_1, f_2, \ldots)$. We note that the following relation holds:
	$$
	\Delta_{k, l_k}f=w_{l_k m_{k-1}}\mathbb{E}_{k-1}[w_{l_k m_{k-1}}^{-1} f_k].
	$$
	
	This relation follows from the fact that $w_n^{-1}w_m=w_{m\dotdiv n}$ and
	$$
	[l_km_{k-1}, (l_k+1)m_{k-1}-1]\dotdiv l_km_{k-1}=[0, m_{k-1}-1].
	$$
	Therefore, we have:
	\begin{multline*}
		\Big\| \Big( \sum_k |\Delta_{k, l_k} f|^2 \Big)^{1/2} \Big\|_p = \Big\| \Big( \sum_k \big|\mathbb{E}_{k-1}[w_{l_k m_{k-1}}^{-1} f_k]\big|^2 \Big)^{1/2} \Big\|_p \\ \lesssim \Big\| \Big( \sum_k \big|w_{l_k m_{k-1}}^{-1}f_k\big|^2 \Big)^{1/2} \Big\|_p=\|Sf\|_p.
	\end{multline*}
	Here we used that for any positive integers $n_k$ the following inequality holds:
	$$
	\|\{\mathbb{E}_{n_k} g_k\}\|_{L^p(\ell^2)}\lesssim \|\{g_k\}\|_{L^p(\ell^2)}.
	$$
	This inequality follows from the easy observation that the (martingale) square function of the $\ell^2$-valued function $\{\mathbb{E}_{n_k} g_k\}$ is less than or equal to the square function of $\{g_k\}$. Indeed, we have:
	$$
	S(\{g_k\})=\Big(\sum_{k}\sum_{j=0}^\infty |\Delta_j g_k|^2\Big)^{1/2}\ge \Big(\sum_{k}\sum_{j=0}^{n_k} |\Delta_j g_k|^2\Big)^{1/2}=S(\{\mathbb{E}_{n_k} g_k\}).
	$$
\end{proof}

As we already mentioned, the above lemma holds for both scalar-valued and $\ell^2$-valued functions $f$. We also note that we used the boundedness of the Vilenkin system here --- this lemma fails for unbounded systems, this issue is also discussed in the paper \cite{Wat}.

We will also need another simple property of the operators $\Delta_{k,l}$. Note that if the support of the function $f$ is contained in the set $e_k\in\FF_{k-1}$, then the support of the function $\Delta_k f$ is also contained in $e_k$ (since $\Delta_k f=\mathbb{E}_k f - \mathbb{E}_{k-1}f$, and the operators $\mathbb{E}_k$ and $\mathbb{E}_{k-1}$ simply average the function $f$ over the segments from $\FF_k$ and $\FF_{k-1}$ respectively). Now we prove the same property for the operators $\Delta_{k,l}$.

\begin{lemma}
	Let $f$ be a function such that $\supp f \subset e_k\in\FF_{k-1}$. Then $\supp \Delta_{k,l} f \subset e_k$, $1\le l\le p_k-1$.
\end{lemma}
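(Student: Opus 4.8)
The plan is to reduce the statement to the support-preservation property of the conditional expectation $\mathbb{E}_{k-1}$, which was already recorded for $\Delta_k$ just before this lemma, by exploiting the factorization of $\Delta_{k,l}$ through $\mathbb{E}_{k-1}$ established during the proof of Lemma 1. First I would observe that $\Delta_{k,l} f = \Delta_{k,l}(\Delta_k f)$ for $1\le l\le p_k-1$: indeed, the index range $[lm_{k-1},(l+1)m_{k-1}-1]$ is contained in $[m_{k-1},m_k-1]$, so the coefficients selected by $\Delta_{k,l}$ are exactly those selected by $\Delta_k$ on that sub-block. Consequently the identity from the proof of Lemma 1, applied with $f_k=\Delta_k f$, gives
$$
\Delta_{k,l} f = w_{l m_{k-1}}\,\mathbb{E}_{k-1}\big[w_{l m_{k-1}}^{-1}\,\Delta_k f\big].
$$

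Next I would track the support through this formula, using that $w_{l m_{k-1}}=r_k^{\,l}$ has modulus $1$ everywhere, so that multiplication by $w_{l m_{k-1}}$ or by $w_{l m_{k-1}}^{-1}$ leaves the support of any function unchanged. By the property stated just before the lemma, $\supp \Delta_k f \subset e_k$; hence $\supp\big(w_{l m_{k-1}}^{-1}\Delta_k f\big)\subset e_k$ as well.

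It then remains to check that $\mathbb{E}_{k-1}$ preserves the inclusion $\supp(\cdot)\subset e_k$ when $e_k\in\FF_{k-1}$. This is the crux, and it is the same averaging argument that underlies the $\Delta_k$ case: since $e_k$ is a union of atoms of $\FF_{k-1}$, for any atom $A\in\FF_{k-1}$ disjoint from $e_k$ a function $g$ supported on $e_k$ satisfies $\int_A g=0$, so $\mathbb{E}_{k-1}g$ (which on $A$ equals $|A|^{-1}\int_A g$) vanishes on $A$; thus $\supp \mathbb{E}_{k-1} g\subset e_k$. Applying this with $g=w_{l m_{k-1}}^{-1}\Delta_k f$ and then multiplying by the unimodular factor $w_{l m_{k-1}}$ yields $\supp\Delta_{k,l}f\subset e_k$, as desired. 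No genuine obstacle arises beyond correctly invoking the factorization; the only point requiring care is that the Rademacher-type factors $w_{l m_{k-1}}=r_k^{\,l}$ are unimodular but \emph{not} $\FF_{k-1}$-measurable, so they must be kept outside the conditional expectation rather than pulled through it.
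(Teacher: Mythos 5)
Your proof is correct, but it follows a genuinely different route from the paper's. The paper argues by localization and orthogonality: it reduces to the case where $e_k$ is a single atom of $\FF_{k-1}$, takes any other atom $e$, notes that $\Delta_k f=\sum_{l=1}^{p_k-1}\Delta_{k,l}f$ vanishes on $e$, and then shows by direct computation with the generalized Rademacher functions (the $r_i$ with $i<k$ are constant on $e$, while distinct powers of $r_k$ integrate to zero over $e$) that the summands $\Delta_{k,l}f$ are pairwise orthogonal in $L^2(e)$; hence each summand vanishes on $e$ separately. You instead reuse the operator identity $\Delta_{k,l}f=w_{lm_{k-1}}\mathbb{E}_{k-1}\bigl[w_{lm_{k-1}}^{-1}\Delta_k f\bigr]$ recorded in the proof of Lemma 1, combined with two elementary facts: multiplication by a unimodular function preserves supports, and $\mathbb{E}_{k-1}$ preserves the inclusion $\supp(\cdot)\subset e_k$ for $e_k\in\FF_{k-1}$. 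Your version is shorter and more structural --- the lemma becomes an immediate corollary of the factorization through the conditional expectation, and you correctly flag the one delicate point (the factor $w_{lm_{k-1}}=r_k^l$ is not $\FF_{k-1}$-measurable and must stay outside $\mathbb{E}_{k-1}$). The paper's version is more self-contained in that it re-derives, on each atom, the orthogonality that is really the mechanism behind the Lemma 1 identity in the first place; the two proofs thus rest on the same algebraic fact, surfaced in different places. Both are complete, and both rely on the same observation made just before the lemma, namely that $\supp\Delta_k f\subset e_k$.
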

\begin{proof}
	Without loss of generality we may assume that  $e_k$ is one of the segments of length $m_{k-1}$ generating the $\sigma$-algebra $\FF_{k-1}$, that is, the segment of the form $[jm_{k-1}^{-1},(j+1)m_{k-1}^{-1})$. Consider any other such segment $e$. We will prove that $\Delta_{k,l} f = 0$ on $e$. As we mentioned above, the function 
	$$
	\Delta_k f=\sum_{l=1}^{p_k-1} \Delta_{k,l} f
	$$
	is equal to $0$ on $e$. Therefore, in order to prove the lemma we only need to show that the functions $\Delta_{k,l} f$, $1\le l\le p_k-1$ are pairwise orthogonal in $L^2(e)$.
	
	In order to do so, it suffices to show the orthogonality in $L^2(e)$ of the functions $w_{n_1}$ and $w_{n_2}$ if $n_1\in[l_1 m_{k-1}, (l_1+1)m_{k-1}-1]$ and $n_2\in[l_2 m_{k-1}, (l_2+1)m_{k-1}-1]$. Such functions $w_{n_1}$ and $w_{n_2}$ are of the following form:
	$$
	w_{n_1}=r_1^{\alpha_1} r_2^{\alpha_2}\ldots r_{k-1}^{\alpha_{k-1}} r_k^{l_1}, \quad w_{n_2}=r_1^{\beta_1} r_2^{\beta_2}\ldots r_{k-1}^{\beta_{k-1}} r_k^{l_2},
	$$
	where $\alpha_1, \ldots, \alpha_{k-1}$ and $\beta_1, \ldots, \beta_{k-1}$ are some nonnegative integers. By the construction, the functions $r_1, \ldots, r_{k-1}$ are constants on $e_k\in\FF_{k-1}$ and the orthogonality of $r_k^{l_1}$ and $r_k^{l_2}$ on $e$ follows from the relation
	$$
	\int_e r_k^{l_1}\bar{r}_k^{l_2}=\int_e r_k^{l_1-l_2}=m_k^{-1}\sum_{s=0}^{p_k-1} e^{\frac{2\pi i(l_1-l_2)s}{p_k}}=0.
	$$
	
\end{proof}

We will need the analogue of Gundy's theorem for Vilenkin systems. Using Lemma 2, we can reformulate the Gundy's theorem in a convenient way. The proof of Gundy's theorem for vector-valued martingales is presented in the paper \cite{KisMart}. %Приведём здесь формулировку (отметим, что эта теорема справедлива только для регулярной фильтрации, и здесь мы ещё раз пользуемся тем, что последовательность $\{p_i\}$ ограничена).

\begin{prop}
	Let  $T$ be a linear operator that maps $\ell^2$-valued functions on $[0,1]$ into scalar ones. Suppose that $T$ is defined on all ``Vilenkin polynomials'', that is, such functions $f$ that $\mathbb{E}_n f = f$ for all sufficiently large values of $n$. Also suppose that the following conditions hold:\\
	1. $\|Tf\|_2\lesssim \|f\|_2$;\\
	2.If a function $f$ is such that $\Delta_0 f=0$ and $\supp\, \Delta_k f\subset e_{k}$ where $e_k\in \FF_{k-1}$, then $\{Tf\neq 0\} \subset \cup_{k\ge 1} e_k$.\\
	In this case $T$ satisfies the weak type $(1,1)$ estimate and therefore is a bounded operator from $L^p(\ell^2)$ to $L^p$ for $1<p\le 2$. 
\end{prop}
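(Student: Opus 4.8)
The plan is to derive the statement from a weak type $(1,1)$ bound: once $T$ is shown to map $L^1(\ell^2)$ into weak-$L^1$, the Marcinkiewicz interpolation theorem applied together with condition~1 yields boundedness from $L^p(\ell^2)$ to $L^p$ for every $1<p<2$, while $p=2$ is condition~1 itself. Since $T$ is a priori defined only on Vilenkin polynomials, which are dense in $L^p(\ell^2)$, it suffices to establish the weak estimate for such $f$; I fix one, normalise $\|f\|_{L^1(\ell^2)}=1$, and fix $\lambda>0$. The mechanism is the martingale Calderón--Zygmund (Gundy) decomposition adapted to $\{\FF_k\}$, which is also what underlies the vector-valued theorem quoted from \cite{KisMart}.

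Writing $f_n=\mathbb{E}_n f$, let $\tau=\inf\{n:|f_n|>\lambda\}$ and $\Omega=\{\tau<\infty\}=\{\sup_n|f_n|>\lambda\}$; Doob's maximal inequality gives $|\Omega|\le\lambda^{-1}\|f\|_1$. Splitting each martingale difference according to the sign of $\tau-k$, I would write $f=g_1+d+b$, where $g_1=\sum_k\mathbf{1}_{\{\tau>k\}}\Delta_k f$ (so $g_1=f_{\tau-1}$ on $\Omega$ and $g_1=f$ on $\Omega^c$), $d=\sum_k\mathbf{1}_{\{\tau=k\}}\Delta_k f$, and $b=\sum_k\mathbf{1}_{\{\tau\le k-1\}}\Delta_k f$. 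The tail $b$ is a genuine martingale: $\Delta_k b=\mathbf{1}_{\{\tau\le k-1\}}\Delta_k f$ has mean zero and is supported on the predictable set $\{\tau\le k-1\}\in\FF_{k-1}\subset\Omega$, so condition~2 forces $\{Tb\ne 0\}\subset\Omega$, a set of measure $\lesssim\lambda^{-1}$. The good part $g_1$ satisfies $\|g_1\|_\infty\le\lambda$ and $\|g_1\|_1\lesssim\|f\|_1$, hence $\|g_1\|_2^2\lesssim\lambda\|f\|_1$, and Chebyshev together with condition~1 gives $|\{|Tg_1|>\lambda\}|\lesssim\lambda^{-2}\|g_1\|_2^2\lesssim\lambda^{-1}$.

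The delicate term is the jump $d$, and this is where I expect the main obstacle: the summands $\mathbf{1}_{\{\tau=k\}}\Delta_k f$ are only $\FF_k$-measurable, so $d$ is neither a martingale nor small in $L^2$. I would repair this by compensating. Set $v_k=\mathbb{E}_{k-1}[\mathbf{1}_{\{\tau=k\}}\Delta_k f]$ and write $d=d_1+d_2$ with $d_1=\sum_k(\mathbf{1}_{\{\tau=k\}}\Delta_k f-v_k)$ and $d_2=\sum_k v_k$. Each summand of $d_1$ is now an honest martingale difference whose support lies in the $\FF_{k-1}$-hull $\tilde e_k$ of $\{\tau=k\}$; by regularity of the filtration --- this is exactly where the boundedness $p_i\le M$ enters --- one has $|\tilde e_k|\le M|\{\tau=k\}|$, whence $\big|\bigcup_k\tilde e_k\big|\le M|\Omega|\lesssim\lambda^{-1}$, and condition~2 again gives $\{Td_1\ne 0\}\subset\bigcup_k\tilde e_k$.

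Finally the compensator $d_2$ combines with $g_1$ into a genuine martingale, since $\mathbb{E}_{k-1}[\mathbf{1}_{\{\tau\ge k\}}\Delta_k f]=0$ gives $v_k=-\mathbb{E}_{k-1}[\mathbf{1}_{\{\tau>k\}}\Delta_k f]$, so that $G:=g_1+d_2=\sum_k(\mathrm{Id}-\mathbb{E}_{k-1})[\mathbf{1}_{\{\tau>k\}}\Delta_k f]$ and $\|G\|_2^2\le\sum_k\int_{\{\tau>k\}}|\Delta_k f|^2$. The plan is to bound this pre-stopping quadratic variation by $\lesssim\lambda\|f\|_1$: expanding $|\Delta_k f|^2=|f_k|^2-|f_{k-1}|^2-2\,\mathrm{Re}\langle f_{k-1},\Delta_k f\rangle$ and summing over $k<\tau$, the telescoping part equals $\int|f_{\tau-1}|^2\le\lambda\|f\|_1$, while the cross terms collapse --- after replacing the $\FF_k$-indicator $\mathbf{1}_{\{\tau>k\}}$ by the predictable $\mathbf{1}_{\{\tau\ge k\}}$, which annihilates $\Delta_k f$ --- to a sum over the stopping set controlled by $\lambda\int_\Omega|f|+\lambda^2|\Omega|\lesssim\lambda\|f\|_1$. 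Thus $G$ is handled exactly like $g_1$, and adding the three contributions yields $|\{|Tf|>\lambda\}|\lesssim\lambda^{-1}\|f\|_1$. I expect the only genuine difficulty to be the bookkeeping around the stopping-time jump, together with the verification that regularity controls the enlargement $\{\tau=k\}\mapsto\tilde e_k$; everything else is the standard interplay of Doob's inequality, $L^2$-orthogonality of martingale differences, and condition~2.
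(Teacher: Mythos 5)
Your argument is correct in substance, and the comparison with the paper is somewhat special: the paper does not prove this proposition at all --- it is quoted as Gundy's theorem for vector-valued martingales with a reference to \cite{KisMart}. What you have written is a self-contained proof along exactly the classical lines underlying that reference: the stopping time $\tau$, the splitting of each $\Delta_k f$ according to $\{\tau>k\}$, $\{\tau=k\}$, $\{\tau\le k-1\}$, the compensation of the jump term so that its differences become martingale differences supported on the $\FF_{k-1}$-hulls $\tilde e_k$ of $\{\tau=k\}$, and the $L^2$ bound $\|G\|_2^2\lesssim\lambda\|f\|_1$ for the compensated good part via telescoping plus the vanishing of the predictable cross terms. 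You also correctly locate the one place where regularity of the filtration (equivalently, boundedness of the $p_i$) is indispensable, namely $|\tilde e_k|\le M|\{\tau=k\}|$; this is precisely why the proposition is stated for bounded Vilenkin systems, and it is the step that fails for unbounded ones.

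One piece of bookkeeping should be added, at the index $k=0$. Condition 2 applies only to functions with vanishing $\Delta_0$, your compensator $v_0=\mathbb{E}_{-1}[\mathbf{1}_{\{\tau=0\}}\Delta_0 f]$ is undefined, and $\{\tau=0\}=\{|f_0|>\lambda\}$ is either empty or all of $[0,1]$, since $\FF_0$ is trivial and $f_0$ is constant. The standard remedy: on the probability space $[0,1]$ the weak-type bound is trivial when $\lambda\le\|f\|_1$, because $|\{|Tf|>\lambda\}|\le 1\le\lambda^{-1}\|f\|_1$; so one may assume $\lambda>\|f\|_1\ge|f_0|$, which forces $\tau\ge1$ everywhere. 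Then all $k=0$ terms in $b$, $d_1$, $d_2$ disappear (so $\Delta_0 b=\Delta_0 d_1=0$ and condition 2 is legitimately applicable), the term $\Delta_0 f$ is simply absorbed into the good part, and the rest of your computation --- all sums being finite since $f$ is a Vilenkin polynomial --- goes through verbatim. With this reduction the proof is complete, and Marcinkiewicz interpolation between the weak $(1,1)$ bound and condition 1 finishes the statement for $1<p\le2$ as you indicate.
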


Lemma 2 implies that in the second condition in this proposition the operators $\Delta_k$ can be replaced with $\Delta_{k,l}$.

\begin{lemma}
	In Proposition 1 the second condition can be replaced with the following: \\
	2'. If a function $f$ is such that $\Delta_0 f=0$ and $\supp \Delta_{k,l} f \subset e_k \in \FF_{k-1}$ for all $1\le l<p_k-1$, then $\{Tf\neq 0\} \subset \cup_{k\ge 1} e_k$.
\end{lemma}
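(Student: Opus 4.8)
The plan is to reduce the modified statement directly to Proposition~1 by showing that any operator $T$ satisfying condition~2' automatically satisfies the original condition~2. Since condition~1 is left untouched, Proposition~1 will then deliver the weak type $(1,1)$ estimate and hence the $L^p(\ell^2)\to L^p$ boundedness for $1<p\le 2$.

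So suppose $T$ satisfies 2', and take an arbitrary $f$ with $\Delta_0 f=0$ and $\supp\,\Delta_k f\subset e_k\in\FF_{k-1}$, i.e.\ a function satisfying the hypothesis of condition~2. The goal is to verify the hypothesis of 2' for this same $f$ with the \emph{same} sets $e_k$. The key observation is that $\Delta_{k,l}$ is the Fourier projection onto the frequency block $[lm_{k-1},(l+1)m_{k-1}-1]$, and for $1\le l\le p_k-1$ these blocks together exhaust the range $[m_{k-1},m_k-1]$ onto which $\Delta_k$ projects. Consequently each block of $\Delta_{k,l}$ sits inside the block of $\Delta_k$, so that $\Delta_{k,l}\Delta_k=\Delta_{k,l}$; in particular $\Delta_{k,l}f=\Delta_{k,l}(\Delta_k f)$.

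Now I would apply Lemma~2 not to $f$ but to the function $\Delta_k f$: since $\supp\,\Delta_k f\subset e_k\in\FF_{k-1}$, Lemma~2 gives $\supp\,\Delta_{k,l}(\Delta_k f)\subset e_k$ for every $1\le l\le p_k-1$, and by the identity above this is exactly $\supp\,\Delta_{k,l}f\subset e_k$. Thus $f$ meets the hypothesis of condition~2' with the very same sets $e_k$, so 2' yields $\{Tf\neq 0\}\subset\cup_{k\ge 1}e_k$, which is precisely the conclusion required in condition~2. Hence $T$ satisfies condition~2, and Proposition~1 applies. There is essentially no hard step here; the only point worth flagging is the need to invoke Lemma~2 on $\Delta_k f$ rather than on $f$, after rewriting $\Delta_{k,l}f=\Delta_{k,l}(\Delta_k f)$. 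This is exactly the content of the remark preceding the lemma that in the second condition ``the operators $\Delta_k$ can be replaced with $\Delta_{k,l}$''.
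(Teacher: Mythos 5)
Your proposal is correct and is essentially the paper's own argument: one reduces condition 2' to condition 2 by noting that $\supp \Delta_k f\subset e_k$ forces $\supp \Delta_{k,l} f\subset e_k$ via Lemma 2 applied to $\Delta_k f$ (using $\Delta_{k,l}f=\Delta_{k,l}(\Delta_k f)$), and then invokes Proposition 1. The paper compresses this into a single sentence; your write-up merely makes explicit the point that Lemma 2 must be applied to $\Delta_k f$ rather than to $f$ itself.
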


Indeed, Lemma 2 implies that if $\supp \Delta_k f \subset e_k$, then $\supp\Delta_{k,l} f\subset e_k$. Now we can prove the boundedness on $L^p$ of the operators of the special form which we will use in what follows. Let us denote the interval $[lm_{k-1}, (l+1)m_{k-1}-1]$ by $\delta_{k,l}$.

\begin{corollary}
	Let $\mathcal{A}\subset\Z_+^2$, $h=\{h_{j,k}\}_{(j,k)\in \Z_+^2}\in L^p(\ell^2)$. For any element $(j,k)\in\mathcal{A}$ we fix a set $\Lambda_{\mathcal{A}}\subset [1, p_k-1]$. Suppose that $\{a_{j,k}\}_{(j,k)\in\mathcal{A}}$ is a family of non-negative integers such that $\{a_{j,k}\dotplus \delta_{k,l}\}_{\substack{(j,k)\in\mathcal{A} \\ l\in\Lambda_{\mathcal{A}}}}$ is a family of pairwise disjoint subsets of $\Z_+$. Let us define the operator $G$ in a following way:
	$$
	Gh=\sum_{\substack{(j,k)\in\mathcal{A} \\ l\in\Lambda_{\mathcal{A}}}} w_{a_{j,k}} \Delta_{k,l}h_{j,k}.
	$$
	Then $\|Gh\|_p \lesssim \|h\|_{L^p(\ell^2)}$ for $1<p\le 2$.
\end{corollary}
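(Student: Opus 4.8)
The plan is to derive this from Proposition 1, in the form given by Lemma 3. First I would note that $G$ is a linear operator sending each $\ell^2$-valued Vilenkin polynomial $h=\{h_{j,k}\}$ (finitely many nonzero components, each a polynomial) to a scalar Vilenkin polynomial, the sum defining $Gh$ being finite; thus $G$ is defined on all the functions to which Proposition 1 applies, and it remains only to check its two hypotheses.

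To verify the $L^2$-bound (condition~1), I would use the multiplicativity $w_a w_n=w_{a\dotplus n}$: since the nonzero Vilenkin--Fourier coefficients of $\Delta_{k,l}h_{j,k}$ lie in $\delta_{k,l}$, those of $w_{a_{j,k}}\Delta_{k,l}h_{j,k}$ lie in $a_{j,k}\dotplus\delta_{k,l}$. By hypothesis the sets $a_{j,k}\dotplus\delta_{k,l}$ are pairwise disjoint, so the summands of $Gh$ are mutually orthogonal in $L^2$; combined with $|w_{a_{j,k}}|\equiv 1$ and the identity $\Delta_k=\sum_{l=1}^{p_k-1}\Delta_{k,l}$ this yields
\begin{multline*}
\|Gh\|_2^2=\sum_{\substack{(j,k)\in\mathcal{A}\\ l\in\Lambda_{\mathcal{A}}}}\|\Delta_{k,l}h_{j,k}\|_2^2 \\
\le\sum_{(j,k)\in\mathcal{A}}\|\Delta_k h_{j,k}\|_2^2\le\sum_{j,k}\|h_{j,k}\|_2^2=\|h\|_{L^2(\ell^2)}^2,
\end{multline*}
since each $\Delta_k$ is an orthogonal projection onto a spectral block.

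For condition~2', I would take $h$ with $\Delta_0 h=0$ and $\supp\Delta_{k,l}h\subset e_k\in\FF_{k-1}$ for every $k\ge 1$ and $1\le l\le p_k-1$. Then the component $\Delta_{k,l}h_{j,k}$ of the $\ell^2$-valued function $\Delta_{k,l}h$ is itself supported in $e_k$, and multiplying by the unimodular factor $w_{a_{j,k}}$ does not enlarge its support. Hence each summand $w_{a_{j,k}}\Delta_{k,l}h_{j,k}$ vanishes off $e_k$, so $\{Gh\neq 0\}\subset\bigcup_{k\ge 1}e_k$, which is exactly condition~2'. With both hypotheses in hand, Proposition 1 (via Lemma 3) gives the weak type $(1,1)$ estimate and therefore the boundedness $\|Gh\|_p\lesssim\|h\|_{L^p(\ell^2)}$ for $1<p\le 2$.

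I do not expect a serious obstacle here: the real content has been front-loaded into Gundy's theorem (Proposition 1) and the square-function Lemma 1, so this step is mainly a matter of packaging. The one point deserving attention is the index bookkeeping --- each component $h_{j,k}$ is touched only by the differences $\Delta_{k,l}$ of the matching level $k$, and it is precisely this diagonal matching that lets the single set $e_k$ in condition~2' control all the summands at level $k$ simultaneously, while the disjointness hypothesis on $\{a_{j,k}\dotplus\delta_{k,l}\}$ is exactly what is needed (and needed only at the $L^2$ stage) for orthogonality.
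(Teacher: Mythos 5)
Your proposal is correct and follows essentially the same route as the paper: orthogonality of the summands from the disjointness of the sets $a_{j,k}\dotplus\delta_{k,l}$ gives the $L^2$ bound, and condition 2$'$ of Lemma 3 is verified directly, so Proposition 1 (via Lemma 3) yields the weak type $(1,1)$ estimate and the $L^p$ bound for $1<p\le 2$. The only difference is that you spell out details the paper leaves implicit (the intermediate comparison with $\|\Delta_k h_{j,k}\|_2$ and the support argument for condition 2$'$), and these details are filled in correctly.
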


\begin{proof}
	Since the sets $a_{j,k}\dotplus \delta_{k,l}$ are disjoint, the functions $w_{a_{j,k}} \Delta_{k,l}(h_{j,k})$ are pairwise orthogonal. Hence, we have:
	$$
	\|Gh\|_2^2 = \sum_{\substack{(j,k)\in\mathcal{A} \\ l\in\Lambda_{\mathcal{A}}}} \|w_{a_{j,k}} \Delta_{k,l}h_{j,k}\|_2^2 = \sum_{\substack{(j,k)\in\mathcal{A} \\ l\in\Lambda_{\mathcal{A}}}} \| \Delta_{k,l}h_{j,k}\|_2^2 \le \|h\|_2^2.
	$$
	Besides that, it is obvious that the condition 2' from Lemma 3 holds for $G$ and therefore $G$ is a weak type $(1,1)$ operator and maps $L^p(\ell^2)$ into $L^p$ for $1<p\le 2$.
\end{proof}

Now we are ready to pass to the proof of Theorem 1.

\section{Proof of Theorem 1}

We have the non-intersecting intervals $I_s=[a_s, b_s)\subset \N_0$. The proof of Theorem 1 will consist of two parts: the decomposition of each interval into smaller parts (which is the generalization of the construction from the paper \cite{Osip}) and the application of this decomposition to the estimate of $L^p$-norm of the function $\sum_s f_s$.

\subsection{The decomposition of intervals}

Let us omit the index $s$ in our notation and describe the decomposition of an interval $I=[a,b)$. Suppose that $b$ has the following $(m)$-radix representation:
$$
b=\beta_{k+1}m_k +\beta_{k}m_{k-1}+\ldots+\beta_1.
$$
At first, we decompose the interval $[0,b)$:
$$
[0,b)=\bigcup_{j=1}^{k+1} J_j, \quad  \text{where} \ J_j=[\beta_{k+1}m_k +\ldots+\beta_{j+1}m_j, \beta_{k+1}m_k +\ldots+\beta_j m_{j-1}-1].
$$
In particular, $J_{k+1}$ is the interval $[0, \beta_{k+1}m_{k}-1]$.
If $\beta_j=0$ for some $j$, then we imply that $J_j$ is empty. Note that $J_j$ consists of the numbers which have the following $(m)$-radix representation:
\begin{equation}
	\label{rep1}
	J_j \sim \begin{pmatrix}
		m_k& \ldots & m_j & m_{j-1}& m_{j-2}& \ldots & m_0\\
		\beta_{k+1} & \ldots & \beta_{j+1} & [0, \beta_j-1]& * & \ldots & *
	\end{pmatrix}.
\end{equation}
This notation means that the numbers in $J_j$ can be represented as $$\beta_{k+1}m_k + \ldots +\beta_{j+1}m_j + \gamma_j m_{j-1}+ \varepsilon_{j-1}m_{j-2}+\ldots + \varepsilon_1,$$ where $\gamma_j\in [0, \beta_j-1]$ and $\varepsilon_i$ is any number in the interval $[0, p_i-1]$, $1\le i\le j-1$. In what follows, we will use such notation without extra clarification.

The number $a$ lies in one of the intervals $J_j$; let $a\in J_t$, $1\le t\le k+1$. In this case, $a$ has the following representation in $(m)$-radix numerical system:
$$
a=\beta_{k+1}m_k+\ldots+\beta_{t+1}m_t + \alpha_t m_{t-1} +\alpha_{t-1}m_{t-2}+\ldots+\alpha_1,
$$
where $\alpha_t < \beta_t$. For the sake of convenience, we set  $\alpha_{k+1}=\beta_{k+1} \ldots, \alpha_{t+1}=\beta_{t+1}$. Now we decompose the interval $[a, \beta_{k+1}m_k + \ldots+\beta_{t+1}m_t+\beta_t m_{t-1}-1]=[a, +\infty)\cap J_t$ in a following way:
$$
[a, +\infty)\cap J_t=\{a\}\cup\bigcup_{j=1}^t \widetilde{J}_j,
$$
where for $1\le j\le t-1$ we have
$$
\widetilde{J}_j = [\alpha_{k+1}m_k+\ldots+\alpha_{j+1}m_j + (\alpha_j+1)m_{j-1}, \alpha_{k+1}m_k+\ldots+\alpha_{j+1}m_j+p_jm_{j-1}-1],
$$
and the interval $\widetilde{J}_t$ is the following:
$$
\widetilde{J}_t=[\alpha_{k+1}m_k+\ldots+\alpha_{t+1}m_t + (\alpha_t+1)m_{t-1}, \alpha_{k+1}m_k+\ldots+\alpha_{t+1}m_t+\beta_tm_{t-1}-1].
$$
If $\alpha_j = p_j-1$, $1\le j\le t-1$, then we imply that the interval $\widetilde{J}_j$ is empty. Similarly, if $\alpha_t = \beta_t-1$, then $\widetilde{J}_t=\emptyset$. As before, we note that the interval $\widetilde{J}_j$  consists of the numbers which have the following form (for $1\le j\le t-1$):
\begin{equation}
	\label{rep2}
	\widetilde{J}_j\sim \begin{pmatrix}
		m_k& \ldots & m_j & m_{j-1}& m_{j-2}& \ldots & m_0\\
		\alpha_{k+1} & \ldots & \alpha_{j+1} & [\alpha_j+1, p_j-1]& * & \ldots & *
	\end{pmatrix}.
\end{equation}
The interval $\widetilde{J}_t$ can be written in such form as
\begin{equation}
	\label{rep3}
	\widetilde{J}_t \sim \begin{pmatrix}
		m_k& \ldots & m_t & m_{t-1}& m_{t-2}& \ldots & m_0\\
		\alpha_{k+1} & \ldots & \alpha_{t+1} & [\alpha_t+1, \beta_t-1]& * & \ldots & *
	\end{pmatrix}.
\end{equation}

So, we constructed the decomposition of an interval $I=[a,b)$:
$$
I=\{a\}\cup\bigcup_{j=1}^t \widetilde{J}_j\cup \bigcup_{j=1}^{t-1}J_j.
$$

\subsection{Completion of the proof}

Now we return the index $s$ to our notation. We appy the decomposition from the previous subsection to each interval  $I_s$:
$$
I_s=\{a_s\}\cup\bigcup_{j=1}^{t_s} \widetilde{J}_{j,s}\cup \bigcup_{j=1}^{t_s-1}J_{j,s}.
$$
We also set $\{a_s\}=:\widetilde{J}_{0,s}$. Now we decompose each function $f_s$ in a following way:
$$
f_s=\sum_{j=0}^{t_s} \widetilde{f}_{j,s}+\sum_{j=1}^{t_s-1}f_{j,s},
$$
where the functions $\widetilde{f}_{j,s}$ and $f_{j,s}$ are defined as
$$
\widetilde{f}_{j,s}=\sum_{n\in \widetilde{J}_{j,s}} (f_s, w_n) w_n,\ 0\le j\le t_s; \quad f_{j,s}=\sum_{n\in J_{j,s}} (f_s, w_n)w_n, \ 1\le j\le t_s-1.
$$

Consider the following functions:
$$
\widetilde{g}_{j,s}=w_{a_s}^{-1}\widetilde{f}_{j,s}, \ 0\le j\le t_s; \quad g_{j,s}=w_{b_s}^{-1}f_{j,s}, \ 1\le j\le t_s-1.
$$
The function $f_s$ can be written in the following way:
\begin{equation}
	\label{repf}
	f_s=w_{a_s}\sum_{j=0}^{t_s}\widetilde{g}_{j,s}+w_{b_s}\sum_{j=1}^{t_s-1} g_{j,s}.
\end{equation}
Note that the non-zero Vilenkin coefficients of functions $\widetilde{g}_{j,s}$ and $g_{j,s}$ are contained in the sets $\widetilde{J}_{j,s}\dotdiv a_s$ and $J_{j,s}\dotdiv b_s$, respectively (these sets turn out to be intervals). From the formulas (\ref{rep1}), (\ref{rep2}), (\ref{rep3}) we see that these sets have the following form (here $1\le j\le t_s-1$):
\begin{align}
	\label{rep2'}
	\widetilde{J}_{j,s}\dotdiv a_s \sim& \begin{pmatrix}
		m_{k_s}& \ldots & m_j & m_{j-1}& m_{j-2}& \ldots & m_0\\
		0 & \ldots & 0 & [1, p_j-1-\alpha_{j,s}]& * & \ldots & *
	\end{pmatrix};\\
	\widetilde{J}_{t_s,s}\dotdiv a_s \sim& \begin{pmatrix}
		m_{k_s}& \ldots & m_{t_s} & m_{t_s-1}& m_{t_s-2}& \ldots & m_0\\
		\label{rep3'}
		0 & \ldots & 0 & [1, \beta_{t_s,s}-1-\alpha_{t_s,s}]& * & \ldots & *
	\end{pmatrix};\\
	\label{rep1'}
	J_{j,s}\dotdiv b_s \sim& \begin{pmatrix}
		m_{k_s}& \ldots & m_j & m_{j-1}& m_{j-2}& \ldots & m_0\\
		0 & \ldots & 0 & [p_j-\beta_{j,s}, p_j-1]& * & \ldots & *
	\end{pmatrix}.
\end{align}
Hence, the equation (\ref{repf}) can be rewritten as
\begin{multline*}
	f_s= w_{a_s}\Big( \Delta_0 \widetilde{g}_{0,s}+\sum_{j=1}^{t_s-1}\sum_{l=1}^{p_j-1-\alpha_{j,s}}\Delta_{j,l}\widetilde{g}_{j,s} +\sum_{l=1}^{\beta_{t_s,s}-1-\alpha_{t_s,s}} \Delta_{t_s, l}\widetilde{g}_{t_s,s} \Big) \\ +w_{b_s}\Big( \sum_{j=1}^{t_s-1} \sum_{l=p_j-\beta_{j,s}}^{p_j-1} \Delta_{j, l}g_{j,s} \Big).
\end{multline*}
We use the Corollary to Lemma 3 and conclude that the following inequality holds:
\begin{equation}
	\label{mainineq1}
	\Big\| \sum_s f_s \Big\|_p\lesssim \Big\| \Big( \sum_s \sum_{j=0}^{t_s} |\widetilde{g}_{j,s}|^2 + \sum_s \sum_{j=1}^{t_s-1} |g_{j,s}|^2 \Big)^{1/2} \Big\|_p.
\end{equation}
The expression in the right hand side of this inequality can be estimated by the quantity
$$
\Big\| \Big( \sum_s \sum_{j=0}^{t_s} |\widetilde{g}_{j,s}|^2 \Big)^{1/2} \Big\|_p+  \Big\|\Big( \sum_s \sum_{j=1}^{t_s-1} |g_{j,s}|^2 \Big)^{1/2} \Big\|_p=: A+B.
$$
Now we estimate $A$ and $B$ separately.

We introduce the notation
$$
\widetilde{g}_s=w_{a_s}^{-1} f_s=\sum_{j=0}^{t_s}\widetilde{g}_{j,s}+w_{a_s}^{-1} \sum_{j=1}^{t_s-1}f_{j,s}.
$$
Using the formulas (\ref{rep2'}), (\ref{rep3'}), (\ref{rep1}) respectively, we obtain the following equations:
\begin{align*}
	&\widetilde{g}_{j,s}=\Delta_j \widetilde{g}_s,\ 0\le j\le t_s-1;\\
	&\widetilde{g}_{t_s,s}=\sum_{l=1}^{\beta_{t_s,s}-1-\alpha_{t_s,s}} \Delta_{t_s, l} \widetilde{g}_s;\\
	&w_{a_s}^{-1} \sum_{j=1}^{t_s-1}f_{j,s}=\Delta_{t_s, \beta_{t_s,s}-\alpha_{t_s,s}}\widetilde{g_s}.
\end{align*}
We conclude that
$$
A\lesssim \Big\| \Big( |\Delta_0 \widetilde{g}_s|^2 +\sum_{j=1}^\infty \sum_{l=1}^{p_j-1} |\Delta_{j,l} \widetilde{g}_s|^2  \Big)^{1/2} \Big\|_p=\|\widetilde{S}(\{\widetilde{g}_s\}_s)\|_p,
$$
where by $\widetilde{S}(\{\widetilde{g}_s\}_s)$ we mean the operator $\widetilde{S}$ applied to $\ell^2$-valued function $\{\widetilde{g}_s\}_s$. Now we simply apply Lemma 1, note that $\|\{g_s\}_s\|_{L^p(\ell^2)}=\|\{f_s\}_s\|_{L^p(\ell^2)}$, and the quantity $A$ is estimated.

The expression $B$ can be estimated similarly. We set
$$
g_s=w_{b_s}^{-1}f_s=w_{b_s}^{-1}\sum_{j=0}^{t_s} \widetilde{f}_{j,s}+\sum_{j=1}^{t_s-1}g_{j,s}.
$$
Now we use the formulas (\ref{rep2}), (\ref{rep3}) and (\ref{rep1'}) in order to conclude that
\begin{align*}
	&w_{b_s}^{-1}\sum_{j=0}^{t_s}\widetilde{f}_{j,s}=\Delta_{t_s}g_s;\\
	&g_{j,s}=\Delta_{j} g_s,\ 1\le j\le t_s-1.
\end{align*}
Hence, we can write the following inequalities: 
$$
B\lesssim \|S(\{g_s\}_s)\|_p\lesssim \|\{g_s\}_s\|_{L^p(\ell^2)}= \|\{f_s\}\|_{L^p(\ell^2)},
$$
and Theorem 1 is proved.

\section{The case $p \le 1$}
In this section we show that the above proof also implies a certain inequality for the values of $p$ not greater than $1$. In order to do so, we need the notion of martingale Hardy spaces $\mathcal{H}^p$. All necessary information about these objects is contained in the book \cite{Weisz}. We note that all statements which we are going to use hold for both scalar-valued and $\ell^2$-valued Hardy spaces.

A function $f$ lies in the Hardy space $\mathcal{H}^p$ if $Sf\in L^p$, $0<p\le 2$. In the case when $p$ is greater than $1$ it is well-known that $\mathcal{H}^p=L^p$, and for $p\le 1$ the Hardy space is not equal to $L^p$. We set $\|f\|_{\mathcal{H}^p}=\|Sf\|_{p}$ (note that for $p<1$ the quantity $\|\cdot\|_p$ is only a quasinorm).

Now we pass to the formulation of the inequality that holds for all values of $p$, $0<p\le 2$.

\begin{theorem}
	Under conditions of Theorem 1 the following inequality holds for the intervals $I_s=[a_s, b_s)$ and any value of $p$,  $0<p\le 2$\emph{:}
	$$
	\Big\| \sum_s f_s \Big\|_p\lesssim \|\{w_{a_s}^{-1}f_s\}_s\|_{\mathcal{H}^p(\ell^2)}+\|\{w_{b_s}^{-1}f_s\}_s\|_{\mathcal{H}^p(\ell^2)}.
	$$
\end{theorem}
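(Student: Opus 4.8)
The plan is to reuse verbatim the interval decomposition and the representation of $\sum_s f_s$ from the proof of Theorem 1, and to replace the appeal to the Corollary of Lemma 3 (which gives $L^p(\ell^2)\to L^p$ boundedness only for $1<p\le 2$) by its endpoint analogue $\mathcal H^p(\ell^2)\to L^p$, valid for $0<p\le 1$. For $1<p\le 2$ there is nothing new: since $\mathcal H^p=L^p$ with $\|\cdot\|_{\mathcal H^p}\asymp\|\cdot\|_p$ in that range and $\|\{w_{a_s}^{-1}f_s\}\|_{L^p(\ell^2)}=\|\{f_s\}\|_{L^p(\ell^2)}$, the asserted inequality is exactly Theorem 1. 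So assume $0<p\le 1$. Writing $\widetilde g_s=w_{a_s}^{-1}f_s$ and $g_s=w_{b_s}^{-1}f_s$ as before, and using the identities $\widetilde g_{j,s}=\Delta_j\widetilde g_s$, $g_{j,s}=\Delta_j g_s$ together with $\Delta_{j,l}\Delta_j=\Delta_{j,l}$ from Section 3, the representation (\ref{repf}) becomes $\sum_s f_s=Gh$, where $h=(\{\widetilde g_s\}_s,\{g_s\}_s)$ is regarded as a single $\ell^2$-valued function and $G$ applies to each component $\widetilde g_s$ (resp.\ $g_s$) a fixed collection of the projections $\Delta_{j,l}$ and multiplies the result by $w_{a_s}$ (resp.\ $w_{b_s}$). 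Exactly as in the Corollary of Lemma 3, the disjointness of the frequency blocks $\widetilde J_{j,s}$ and $J_{j,s}$ shows that $G$ satisfies condition 1 of Proposition 1 (it is $L^2$-bounded, by orthogonality) and condition 2' of Lemma 3.

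The main point is therefore the following endpoint statement: an operator $T$ satisfying condition 1 of Proposition 1 and condition 2' of Lemma 3 maps $\mathcal H^p(\ell^2)$ into $L^p$ for every $0<p\le 1$. I would prove this by the atomic method. By the atomic decomposition of the $\ell^2$-valued martingale Hardy space \cite{Weisz}, every $h\in\mathcal H^p(\ell^2)$ can be written as $h=\sum_i\lambda_i\mathfrak a_i$ with $\sum_i|\lambda_i|^p\lesssim\|h\|_{\mathcal H^p(\ell^2)}^p$, where each atom $\mathfrak a_i$ is supported on a set $e_i\in\FF_{n_i}$, satisfies $\mathbb E_{n_i}\mathfrak a_i=0$, and obeys $\|\mathfrak a_i\|_{L^2(\ell^2)}\le|e_i|^{1/2-1/p}$. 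For a single atom, the cancellation $\mathbb E_{n_i}\mathfrak a_i=0$ forces $\Delta_0\mathfrak a_i=0$ and $\Delta_{k,l}\mathfrak a_i=0$ for $k\le n_i$, while for $k>n_i$ Lemma 2 gives $\supp\Delta_{k,l}\mathfrak a_i\subset e_i\in\FF_{k-1}$; hence condition 2' applies with every $e_k=e_i$ and yields $\{T\mathfrak a_i\neq 0\}\subset e_i$. Combining this localization with condition 1 and Hölder's inequality on $e_i$ gives the uniform atomic bound $\|T\mathfrak a_i\|_p\le\|T\mathfrak a_i\|_2\,|e_i|^{1/p-1/2}\lesssim\|\mathfrak a_i\|_{L^2(\ell^2)}\,|e_i|^{1/p-1/2}\le 1$. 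Finally, the $p$-subadditivity of $\|\cdot\|_p^p$ for $p\le 1$ gives $\|Th\|_p^p\le\sum_i|\lambda_i|^p\|T\mathfrak a_i\|_p^p\lesssim\sum_i|\lambda_i|^p\lesssim\|h\|_{\mathcal H^p(\ell^2)}^p$, which is the desired bound. (This is just the $\mathcal H^p$-version of the argument behind Gundy's theorem, so one could alternatively cite it directly from \cite{Weisz,KisMart}.)

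Applying this to $T=G$ yields $\|\sum_s f_s\|_p^p\lesssim\|h\|_{\mathcal H^p(\ell^2)}^p$, and it remains only to split the right-hand side. Since $\Delta_n$ acts componentwise, $S(h)^2=S(\{\widetilde g_s\}_s)^2+S(\{g_s\}_s)^2$ pointwise, so the subadditivity of $t\mapsto t^{p/2}$ (valid as $p\le 2$) together with $\|u\|_{\mathcal H^p}=\|Su\|_p$ gives $\|h\|_{\mathcal H^p(\ell^2)}^p=\|S(h)\|_p^p\le\|\{\widetilde g_s\}_s\|_{\mathcal H^p(\ell^2)}^p+\|\{g_s\}_s\|_{\mathcal H^p(\ell^2)}^p$. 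Recalling $\widetilde g_s=w_{a_s}^{-1}f_s$ and $g_s=w_{b_s}^{-1}f_s$, and using the elementary inequality $A^p+B^p\le 2(A+B)^p$ for $p\le 1$ to pass from $p$-th powers back to the quasinorms, completes the proof. The only genuinely new ingredient — and the step I expect to require the most care — is the endpoint boundedness $\mathcal H^p(\ell^2)\to L^p$ of the middle paragraph, in particular verifying that the support and cancellation hypotheses (condition 1 of Proposition 1 and condition 2' of Lemma 3) are precisely what the atomic estimate needs; everything else is the Theorem 1 decomposition together with routine quasinorm manipulations.
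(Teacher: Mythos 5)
Your proof is correct, and its skeleton is the paper's: reduce $1<p\le 2$ to Theorem 1, and for $0<p\le 1$ replace the Corollary of Lemma 3 by the endpoint statement that a linear operator satisfying condition 1 of Proposition 1 and condition 2' of Lemma 3 maps $\mathcal{H}^p(\ell^2)$ into $L^p$. The paper only sketches that endpoint step (``easily follows from the atomic decomposition''), whereas you carry it out: the localization $\{T\mathfrak{a}\neq 0\}\subset e$ via Lemma 2 and condition 2', the H\"older estimate on $e$, and $p$-subadditivity; this is exactly the intended argument. The one genuine difference is in how the square functions are organized afterwards. The paper keeps the two-step structure of Theorem 1: it deduces inequality (\ref{mainineq1}) for $0<p\le 1$ (reading its right-hand side as the $\mathcal{H}^p(\ell^2)$-norm of the family $(\{\widetilde{g}_{j,s}\},\{g_{j,s}\})$, legitimate because each component has single-block spectrum), then splits into $A+B$, and in bounding $A$ it must pass from $\|\widetilde{S}(\{\widetilde{g}_s\})\|_p$ to $\|S(\{\widetilde{g}_s\})\|_p$; for $p\le 1$ this requires the equivalence $\|\widetilde{S}f\|_p\asymp\|Sf\|_p$, for which the paper invokes Simon's result \cite{Sim}. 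You instead fold the projections into the operator itself, using $\Delta_{j,l}\Delta_j=\Delta_{j,l}$, so that it acts directly on $h=(\{\widetilde{g}_s\}_s,\{g_s\}_s)$; the verification of conditions 1 and 2' is unchanged (disjointness of the blocks $\widetilde{J}_{j,s}$, $J_{j,s}$ in $\Z_+$, plus Lemma 2), and the bound you obtain involves the genuine martingale square function $S(h)$, which splits pointwise as $S(h)^2=S(\{\widetilde{g}_s\})^2+S(\{g_s\})^2$. This bypasses the $\widetilde{S}$-versus-$S$ comparison for $p\le 1$ entirely, so no appeal to \cite{Sim} is needed and the proof is marginally more self-contained. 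One caveat, which you share with the paper rather than introduce: the $\Delta_0$ terms $\sum_s(f_s,w_{a_s})w_{a_s}$ are not literally covered by condition 2' (which assumes $\Delta_0 f=0$) or by the stated form of the Corollary; they are harmless, since this is an orthogonal sum and $\|\cdot\|_p\le\|\cdot\|_2$ on a probability space, but a careful write-up should dispose of them explicitly.
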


\begin{proof}
	First of all, we note that for $p>1$ this theorem is equivalent to Theorem 1. Indeed, we just substitute the norms in Hardy spaces in the right rand side of the inequality with the $L^p(\ell^2)$-norms and infer that $\|\{w_{a_s}^{-1}f_s\}_s\|_{L^p(\ell^2)}=\|\{w_{b_s}^{-1}f_s\}_s\|_{L^p(\ell^2)}=\|\{f_s\}_s\|_{L^p(\ell^2)}$.
	
	Now we briefly show how to change the proof of Theorem 1 in order to obtain the desired inequality for $0<p\le 1$.
	
	At first, we note that any operator $T$ that satisfies the conditions of Lemma 3 acts from the Hardy space $\mathcal{H}^p(\ell^2)$ to $L^p$. This fact easily follows from the atomic decomposition of the functions in Hardy spaces (we again refer the reader to the book \cite{Weisz} which contains all necessary theory for the scalar-valued spaces $\mathcal{H}^p$; as for the spaces, $\mathcal{H}^p(\ell^2)$ all necessary statements hold for them with the same proofs).
	
	This observation allows us to conclude that the inequality (\ref{mainineq1}) holds also for $0<p\le 1$ (since $\Delta_j \widetilde{g}_{j,s}=\widetilde{g}_{j,s}$ and $\Delta_j g_{j,s}=g_{j,s}$, see the formulas (\ref{rep2'}), (\ref{rep3'}), (\ref{rep1'})).
	
	Next, we estimated the expressions $A$ and $B$ by the quantities $\|\widetilde{S}(\{\widetilde{g}_s\}_s)\|_p$ and $\|S(\{g_s\}_s)\|_p$. In order to finish the proof of Theorem 2 we simply note that $\|\widetilde{S}f\|_p\asymp \|Sf\|_p$ for $0<p\le 1$ (it means that the Hardy spaces for Vilenkin systems can also be defined using the quadratic function $\widetilde{S}$ instead of $S$). This fact is also known --- since we have the pointwise inequality $Sf\lesssim\widetilde{S}f$, it is just a reformulation of the fact that the operator $\widetilde{S}$ maps $\mathcal{H}^p$ to $L^p$. And this is true because $\widetilde{S}$ is a sublinear operator which satisfied the conditions of Lemma 3. This is verified in the paper \cite{Sim} (besides that, in this paper it is proved that $\|\widetilde{S}f\|_p\asymp \|Sf\|_p$ for $p=1$). 
\end{proof}

\section{The related open problems}

The results of this paper are obtained only for bounded Vilenkin systems. This is a natural assumption in many questions regarding analysis on Vilenkin groups --- see for example the papers \cite{Sim} and \cite{Wat}. As we already mentioned earlier, the boundedness of Vilenkin system is necessary for the boundedness of the operator $\widetilde{S}$ on $L^p$. Besides that, this assumption implies the regularity of the underlying filtration. Nevertheless, sometimes it is possible to abandon this assumption (see for example the paper \cite{Young}). We do not know whether the theorems from this paper hold for unbounded Vilenkin systems, however our ``combinatorial'' approach uses this assumption heavily.

Besides that, we do not know if the inequality from Theorem 1 holds for $0<p\le 1$. The fact that the inequality (\ref{RdFclass}) is valid in the case $p\le 1$ allows us to hope that the corresponding theorem might be true for Vilenkin systems. However, it seems to be an open question even for the particular case of Walsh functions.

\bigskip

Saint Petersburg Leonard Euler International Mathematical Institute, Fontanka 27, St.  Petersburg 191023, Russia

St. Petersburg Department, Steklov Math. Institute, Fontanka 27, St. Petersburg 191023 Russia

\medskip

celis-anton@yandex.ru
%\begin{thebibliography}{99}

%\bibitem{Osipov} N. N. Osipov, \emph{Littlewood–Paley–Rubio de Francia inequality for the Walsh system}, Алгебра и анализ, \textbf{28}:5 (2016), 236--246

%\bibitem{RdF} Jos\'e L. Rubio de Francia, \emph{A Littlewood--Paley inequality for arbitrary intervals}, Rev. Mat. Iberoamericana, \bf{1} (1985), no. 2 

%\end{thebibliography}

%\begin{bibdiv}
%\begin{biblist}

%	\bib{Osipov}{article}{
%		author={Osipov, N. N.},
%		title={Littlewood--Paley--Rubio de Francia inequality for the Walsh system},
%		journal={Алгебра и анализ},
%		volume={28},
%		date={2016},
%		number={5}*{language={russian}},
%		pages={236--246}
%	}

%	\bib{RdF}{article}{
%		author={Rubio de Francia, Jos\'{e} L.},
%		title={A Littlewood-Paley inequality for arbitrary intervals},
%		journal={Rev. Mat. Iberoamericana},
%		volume={1},
%		date={1985},
%		number={2},
%		pages={1--14},
%	}
%\end{biblist}
%\end{bibdiv}

\end{document}